\newtheorem{theorem}{Theorem}[section]
\newtheorem{lemma}[theorem]{Lemma}
\newtheorem{example}[theorem]{Example}
\newtheorem{corollary}[theorem]{Corollary}
\newtheorem{proposition}[theorem]{Proposition}
\newtheorem{definition}[theorem]{Definition}
\newenvironment{proof}{{\bf Proof:} }{$\Box$
\mbox{}}
\newenvironment{exs}{{\bf Examples:} }
\def \r {\vartriangleleft}
\def \As {\mathbf{As}}
\def \Conj {\mathbf{Conj}}
\def \G {\mathcal{G}}
\def \X {\mathcal{X}}
\begin{document}

\title{Pullback Crossed Modules in the Category of Racks}

\author{
	Kad\.{i}r Em\.{i}r\thanks{Corresponding author.} \\
	\small{kadiremir86@gmail.com} 	\\
	\and Hatice G\"{u}ls\"{u}n Akay   \\
	\small{hgulsun@ogu.edu.tr} 
	\and \small{Department of Mathematics and Computer Science,} \\ \small{Eski\c{s}ehir Osmangazi University, Turkey.}}

\date{}
\maketitle

\begin{abstract}
In this paper, we define the pullback crossed modules in the category of racks which mainly based on a pullback diagram of rack morphisms with extra crossed module data on some of its arrows. Furthermore we prove that the conjugation functor, which is defined between the category of crossed modules of groups and of racks, preserves the pullback crossed modules.

\end{abstract}

AMS 2010 Classification: 18D05, 18A30, 18A40.

Keywords: Rack, crossed module, limit, pullback.

\section*{Introduction}

A rack $R$ is a set equipped with a non-associative binary
operation satisfying: $$\left( x\vartriangleleft y\right)
\vartriangleleft z=\left( x\vartriangleleft z\right) \vartriangleleft \left(
y\vartriangleleft z\right) $$ for all $x,y,z\in R$. Racks have been variously studied under plenty of names and a variety of terminology in the literature. They are called automorphic sets \cite{Brieskorn}, crystals \cite{Kauffman}, left distributive left quasigroup \cite{Stanosky}, and rack (as a modification of wrack) \cite{Conway}. The most important example of racks comes from the conjugation in a group $G$ where $g\vartriangleleft h=h^{-1}gh$ for all $g,h\in G$. This property yields to a functor $\mathbf{Conj \colon Grp \to Rack}$ from the category of groups to the category of racks. Moreover, there exists an adjunction \cite{RC1} between these two categories with:
\begin{align*}
	Hom\big(\As(X),G \big) \cong Hom\big(X , \Conj(G) \big)
\end{align*}
where the functor $\mathbf{As \colon Rack \to Grp}$ is left adjoint to the functor $\mathbf{Conj}$. 

\medskip

Crossed modules of racks \cite{Crans} generalizes the notion of crossed modules of groups \cite{WJHC} such that satisfying two certain Peiffer conditions. An interesting result of this notion is;  the functors $\mathbf{As}$ and $\mathbf{Conj}$ are preserving the crossed module structures, see \cite{Crans}. Therefore we can also consider them as the (induced) functors between the category of crossed modules of groups $\mathbf{XGrp}$ and the category of crossed modules of racks $\mathbf{XRack}$; where the previous adjunction leads to the following extended adjunction:
\begin{align*}
	Hom\big(\As_X(\mathcal{X}),\mathcal{G} \big) \cong Hom\big(\mathcal{X} , \Conj_X (\mathcal{G}) \big)
\end{align*}
when $\mathcal{X}$ is a crossed module of racks and $\mathcal{G}$ is a crossed module of groups. Consequently, one can say that the functor $\mathbf{Conj}_X$ preserves limits and $\mathbf{As}_X$ preserves colimits.

\medskip

 Crossed modules of groups or racks which have the same fixed codomain $A$ will be called as crossed$_A$ modules and lead to the full subcategories of the corresponding categories, which are denoted by $\mathbf{XGrp/A}$ and $\mathbf{XRack/A}$ respectively. Pullback crossed modules in the category of groups \cite{zbMATH00861611}, indeed are not the same as with the pullback objects in the category of crossed modules. They are constructed over a crossed$_R$ module and a group homomorphism $S \to R$ which gives rise to a crossed$_S$ module definition, in the sense of pullback diagram. This construction yields to the functor $i^{\star} \colon \mathbf{XRack/R \to XRack/S}$ which is a left adjoint to induced functor introduced in \cite{zbMATH00861611}.

\medskip

In this paper, we construct the pullback crossed module in the category of racks. Furthermore, we see that the functor $\mathbf{Conj}_X$ preserves this construction in the sense of the following commutative diagram:
$$\xymatrix@R=40pt@C=40pt{
	\mathbf{XGrp/_{\square}} \ar[d]^{i^{\star}} \ar[r]^{\Conj_X}  
	&  \mathbf{XRack/_{\square}} \ar[d]^{i^{\star}}    \\
	\mathbf{XGrp/_{\square}}  \ar[r]^{\Conj_X} 
	& \mathbf{XRack/_{\square}}        
}$$

%As a consequence of this constructions, we have the similar constructions and adjunctions for the case of category object in the category of racks, namely $Cat^1$ racks. Recall that category of $Cat^1$ racks are equivalent to the category of crossed modules.

%-----------------------------------
%
%The theory of racks is strongly connected to the theory of conjugation in
%groups. The earliest work on racks is due to Conway and Wraith and appears in unpublished correspondence between the authors. Their work is inspired by the conjugacy operation in a group and so focuses in the special case of quandles, but they also were aware of the general notion. 

\section*{Acknowledgment}

The authors are thankful to F.Wagemann for his valuable comments on the categorical aspects of racks. The second author is partially supported by T\"{u}bitak (the scientific and technological research council of Turkey).

\section{Preliminaries}

We recall some notions from \cite{Crans, RC1} which will be used in the sequel.

\subsection{Category of Racks}

\begin{definition}
A (right) rack $R$ is a set equipped with a (right) binary operation with the following conditions:
\begin{itemize}
	\item for each $a,b\in R$, there is a unique $c\in R$ such that:
	\begin{equation*}
	c\vartriangleleft a=b
	\end{equation*}
	\item for all $a,b,c\in R$, we have:
	\begin{equation*}
	\left( a\vartriangleleft b\right) \vartriangleleft c=\left(
	a\vartriangleleft c\right) \vartriangleleft \left( b\vartriangleleft
	c\right) .
	\end{equation*}
\end{itemize}

 A pointed rack is a rack $R$ with an element $1\in R$ such that (for all $a \in R$):
\begin{equation*}
1\vartriangleleft a=1\text{ \ \ \ \ and \ \ \ \ }a\vartriangleleft 1=a
\end{equation*}
From now on, all the racks will be pointed in the rest. 

\medskip

Let $R,S$ be two racks. A rack homomorphism is a map $f \colon R\rightarrow S $ such that:
\begin{equation*}
f\left( a \vartriangleleft b \right) =f\left( a\right)
\vartriangleleft f\left( b \right) \text{ \ \ \ \ and \ \ \ \ } f\left( 1\right) =1
\end{equation*}%
for all $a,b \in R$. Thus we have the category of racks, denoted by $\mathbf{Rack}$. Alternatively, for a point of view on racks where the two right and left rack operations are treated on an equal basis, see \cite{JFM}.
\end{definition}

\medskip

\begin{exs}
	\medskip
	
$\textbf{1)}$ Given a group $G$, there exists a rack structure on $G$ where the binary operation is:
\begin{equation*}
g\vartriangleleft h=h^{-1}gh,
\end{equation*}%
for all $g,h\in G$. This rack is called the conjugation rack of $G$, from which we get the functor:
\begin{align*}
\mathbf{Conj}: \textbf{Grp} \to \textbf{Rack}
\end{align*}

\medskip

$\textbf{2)}$ The core rack on a group $G$ is defined by:
\begin{equation*}
g\vartriangleleft h=hg^{-1}h,
\end{equation*}%
for all $g,h\in G$; however this construction is not functorial.

\medskip

$\textbf{3}$ Let $P,R$ be two racks, we have a rack structure on $P\times R$ defined by:
\begin{equation*}
\left( p,r\right) \vartriangleleft \left( p^{\prime },r^{\prime
}\right) =\left( p\vartriangleleft p^{\prime },r\vartriangleleft r^{\prime
}\right)
\end{equation*}
which is also the product object in the category of racks.
\end{exs}

\subsection{Rack Action}

\begin{definition}\label{act}
	Let $R,S$ be two racks. The map $\cdot \colon S \times R \to S$ is called a (right) action of $R$ on $S$ if it satisfies (for all $s,s' \in S$ and $r,r' \in R$):
	\begin{itemize}
	\item 	$ \left( s \cdot r\right) \cdot r^{\prime} =\left( s \cdot r^{\prime }\right) \cdot \left( r\vartriangleleft r^{\prime }\right)$,
	\item $(s \r s') \cdot r = (s \cdot r) \r (s' \cdot r)$.
	\end{itemize}
\end{definition}

\begin{definition}
	If there exists a (right) rack action of $R$ on $S$, the hemi-semi-direct product $S \rtimes R$ is the rack defined by:
	\begin{equation*}
		\left( s,r\right) \vartriangleleft \left( s^{\prime },r^{\prime }\right)
		=\left( s \cdot r' , r \r r' \right),
	\end{equation*}%
	for all $s,s' \in S$ and $r,r^{\prime }\in R$. 
\end{definition}

	Remark that $s'$ disappears in the hemi-semi direct operation which is the main technical difference from semi-direct product of groups and cause various problems when we deal with it.

\subsection{Crossed Modules of Racks}

A crossed module of racks is a rack homomorphism $\partial :R\to S$
together with a (right) rack action of $S$ on $R$ such that following two Peiffer relations hold (for all $r,r^{\prime }\in R$ and $s \in S$):
\begin{itemize}
\item[X1)] $ \partial \left( r\cdot s\right) =\partial \left( r\right) \vartriangleleft s$,
\item[X2)] $r\cdot \partial \left( r^{\prime }\right) =r\vartriangleleft r^{\prime }$.
\end{itemize}
If $\left( R,S,\partial \right) $ and $\left( R^{\prime },S^{\prime
},\partial ^{\prime }\right) $ are two crossed module of racks, a crossed module morphism:
\begin{equation*}
\left( f_{1},f_{0}\right) :\left( R,S,\partial \right) \rightarrow \left(
R^{\prime },S^{\prime },\partial ^{\prime }\right)
\end{equation*}%
is a tuple which consists of rack homomorphisms $f_{1}:R\rightarrow
R^{\prime }$, $f_{0}:S\rightarrow S^{\prime }$ such that: \newpage
\begin{itemize}
\item $\partial ^{\prime }f_{1}=f_{0} \, \partial$,
\item $f_{1}\left( r\cdot s\right) =f_{1}\left( r\right) \cdot
f_{0}\left( s\right)$,
\end{itemize}
for all $r\in R$, $s\in S$. Thus get the category of crossed modules of racks, denoted by $\mathbf{XRack}$. 

\bigskip

\begin{exs}
	
\medskip

$\mathbf{1)}$ Let $N \subset R$ be a normal subrack of $R$ (i.e. $n \r r \in R$ for all $n \in N$). The inclusion map $\partial :N\rightarrow R$ is a crossed module (inclusion crossed module) where the action is defined by the main rack operation.

\medskip

$\mathbf{2)}$ Let $\mu :M\rightarrow N$ be a crossed module of groups. We obtain a
crossed module of racks by passing to the associated conjugation racks of $M$
and $N$.$\medskip $

%$\mathbf{3)}$ An augmented rack $p:X\rightarrow G$ is a crossed module of racks with
%the rack operation
%\begin{equation*}
%x\vartriangleleft x^{\prime }=x\cdot p\left( x^{\prime }\right)
%\end{equation*}%
%for all $x,x^{\prime }\in X$.$\medskip $
\end{exs}

\section{Fiber Product of Racks}

\begin{definition}
Let $\alpha  \colon P\to R$ and $\beta \colon S \to R$ be two rack homomorphisms. The fiber product $P\times _{R}S$ is the subrack of the rack $P\times S$ defined by:
\begin{equation*}
P\times _{R}S=\left\{ \left( p,s\right) \mid \alpha \left( p\right) =\beta
\left( s\right) \right\}
\end{equation*}%
In the categorical point of view, the fiber product is the equalizer of the parallel rack homomorphisms:
\begin{align*}
\xymatrix{
P \times S \ar@<0.5ex>[rr]^{\alpha \circ \pi _{1}} \ar@<-0.5ex>[rr]_{\beta \circ \pi _{2}} && R}
\end{align*} 
%$$\xymatrix{
%   & P\times _{S}R \ar[dl]_-{\pi _{1}} \ar[dr]^-{\pi _{2}} &
%  \\
%    P \ar[dr]_-{\alpha}  & & R \ar[dl]^-{\beta}
%     \\ &S& }
%$$
%where $\pi _{1}$ and $\pi _{2}$ are the projection maps. Remark that, the rack operation of fiber product is the same of cartesion product, namely:
%\begin{equation*}
%\left( p,s\right) \vartriangleleft \left( p^{\prime },s^{\prime }\right)
%=\left( p\vartriangleleft p^{\prime },s\vartriangleleft s^{\prime }\right) .
%\end{equation*}
\end{definition}
%
%And more the fiber product satisfy the universal property: Given any two
%morphism $f:T\rightarrow P$ and $g:T\rightarrow R$ such that
%\begin{equation*}
%\alpha f=\beta g,
%\end{equation*}%
%there exists a unique morphism%
%\begin{equation*}
%\varphi :T\rightarrow P\times _{S}R
%\end{equation*}%
%such that
%\begin{equation*}
%\pi _{1}\varphi =f\text{ \ \ \ and \ \ \ }\pi _{2}\varphi =g
%\end{equation*}%
%holds.
%\end{definition}

\begin{proposition}
Let $\alpha \colon P\rightarrow R$ and $\beta :S \to R$ be two crossed modules of racks. Then the map $\partial :P\times _{R} S \to R$ defines a crossed module with the (right) rack action:
\begin{equation*}
\begin{array}{cll}
\left( P\times _{R} S\right) \times R & \rightarrow & P\times _{R}S \\
\left( \left( p,s\right) ,r\right) & \mapsto & \left( p,s\right) \cdot
r=\left( p\cdot r,s\cdot r\right)%
\end{array}%
\end{equation*}
\end{proposition}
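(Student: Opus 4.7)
The plan is to verify each ingredient of the crossed module structure componentwise, exploiting that $\alpha$ and $\beta$ are already crossed modules with actions of $R$ on $P$ and $S$. Writing $\partial(p,s) := \alpha(p) = \beta(s)$ for $(p,s) \in P \times_R S$, there are four things to check: (i) the proposed action sends $P \times_R S$ into itself; (ii) it satisfies the rack-action axioms of Definition \ref{act}; (iii) $\partial$ is a pointed rack homomorphism; (iv) the two Peiffer conditions X1 and X2 hold.

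For (i), applying X1 for $\alpha$ and for $\beta$ at a common $r \in R$ yields $\alpha(p \cdot r) = \alpha(p) \vartriangleleft r = \beta(s) \vartriangleleft r = \beta(s \cdot r)$, so $(p \cdot r, s \cdot r)$ lies in the fiber product. For (ii), both axioms of Definition \ref{act} split into a pair of identities living entirely in $P$ and in $S$, so they reduce coordinate-by-coordinate to the action axioms already available for $\alpha$ and $\beta$. Item (iii) is immediate, since $\partial$ is the restriction of $\alpha \circ \pi_1$ (equivalently $\beta \circ \pi_2$) to the fiber product and both factors are rack homomorphisms; pointedness is likewise trivial.

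The Peiffer conditions carry the content of the proposition. X1 is a one-line calculation: $\partial((p,s) \cdot r) = \alpha(p \cdot r) = \alpha(p) \vartriangleleft r = \partial(p,s) \vartriangleleft r$. For X2, expanding gives $(p,s) \cdot \partial(p',s') = (p \cdot \alpha(p'),\, s \cdot \alpha(p'))$. The first coordinate becomes $p \vartriangleleft p'$ by X2 for $\alpha$; for the second coordinate, I would first rewrite $\alpha(p')$ as $\beta(s')$ using the defining equation of the fiber product, and then apply X2 for $\beta$ to obtain $s \vartriangleleft s'$. This produces $(p,s) \vartriangleleft (p',s')$ as required.

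The only genuinely non-routine step is this second coordinate of X2, which is the place where the equalizer condition $\alpha(p') = \beta(s')$ is actually used to glue the two separate Peiffer conditions together; every other part of the argument is a componentwise verification. I would therefore organize the proof to dispatch (i)–(iii) in a few short lines and reserve the remaining effort for displaying the X2 calculation.
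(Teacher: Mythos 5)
Your proposal is correct and follows essentially the same componentwise verification as the paper: reduce the action axioms to those of $\alpha$ and $\beta$, check X1 directly, and use the equalizer condition $\alpha(p')=\beta(s')$ to handle the second coordinate of X2. The one point where you go slightly beyond the paper is your item (i), explicitly checking via X1 that $(p\cdot r, s\cdot r)$ again lies in $P\times_R S$ --- a well-definedness step the paper's proof takes for granted, and a worthwhile addition.
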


\begin{proof}
$S$ acts on the fiber product $P\times _{R}S$ since (for all $\left(
p,s\right) ,\left( p^{\prime },s^{\prime }\right) \in P\times _{R}S$ and $%
r,r^{\prime }\in R$):
\begin{itemize}
	\item 
	\begin{align*}
	\left( \left( p,s\right) \cdot r\right) \cdot r^{\prime } & = \left(
	p\cdot r,s\cdot r\right) \cdot r^{\prime }   \\ 
	& = \left( \left( p\cdot r\right) \cdot r^{\prime },\left( s\cdot r\right)
	\cdot r^{\prime }\right)   \\ 
	& = \left( \left( p\cdot r^{\prime }\right) \cdot \left( r\vartriangleleft
	r^{\prime }\right) ,\left( s\cdot r^{\prime }\right) \cdot \left(
	r\vartriangleleft r^{\prime }\right) \right)  \\ 
	& = \left( \left( p\cdot r^{\prime }\right) ,\left( s\cdot r^{\prime
	}\right) \right) \cdot \left( r\vartriangleleft r^{\prime }\right)   \\ 
	& = \left( \left( p,s\right) \cdot r^{\prime }\right) \cdot \left(
	r\vartriangleleft r^{\prime }\right) 
	\end{align*}
	
	\item 
	\begin{align*}
	\left( \left( p,s\right) \vartriangleleft \left( p^{\prime },s^{\prime
	}\right) \right) \cdot r & =  \left( p\vartriangleleft p^{\prime
},s\vartriangleleft s^{\prime }\right) \cdot r   \\ 
& =  \left( \left( p\vartriangleleft p^{\prime }\right) \cdot r,\left(
s\vartriangleleft s^{\prime }\right) \cdot r\right)   \\ 
& =  \left( \left( p\cdot r\right) \vartriangleleft \left( p^{\prime }\cdot
r\right) ,\left( s\cdot r\right) \vartriangleleft \left( s^{\prime }\cdot
r\right) \right)   \\ 
& =  \left( \left( p\cdot r\right) ,\left( s\cdot r\right) \right)
\vartriangleleft \left( \left( p^{\prime }\cdot r\right) ,\left( s^{\prime
}\cdot r\right) \right)   \\ 
& =  \left( \left( p,s\right) \cdot r\right) \vartriangleleft \left( \left(
p^{\prime },s^{\prime }\right) \cdot r\right)  
\end{align*}
\end{itemize}
Also the map $\partial :P\times _{R}S\rightarrow R$ is a rack homomorphism
since: 
\begin{align*}
\partial \left( \left( p,s\right) \vartriangleleft \left( p^{\prime
},s^{\prime }\right) \right) & = \partial \left( p\vartriangleleft
p^{\prime },s\vartriangleleft s^{\prime }\right)   \\ 
& = \alpha \left( p\vartriangleleft p^{\prime }\right)  \\ 
& = \alpha \left( p\right) \vartriangleleft \alpha \left( p^{\prime
}\right)   \\ 
& = \partial \left( p,s\right) \vartriangleleft \partial \left( p^{\prime
},s^{\prime }\right) . 
\end{align*}
Finally $\partial $ is a crossed module of racks with the above action:
\begin{itemize}
	\item[X1)]
	\begin{align*}
	\partial \left( \left( p,s\right) \cdot r\right) & =  \partial \left(
	p\cdot r,s\cdot r\right)  \\ 
	& =  \alpha \left( p\cdot r\right)   \\ 
	& =  \alpha \left( p\right) \vartriangleleft r \quad (\because X1 \text{ condition of } \alpha)  \\ 
	& =  \partial \left( p,s\right) \vartriangleleft r 
	\end{align*}
	
	\item[X2)]
	\begin{align*}
	\left( p,s\right) \cdot \partial \left( p^{\prime },s^{\prime }\right) & = 
	\left( p,s\right) \cdot \alpha \left( p^{\prime }\right) \\ 
	& =  \left( p\cdot \alpha \left( p^{\prime }\right) ,s\cdot \alpha \left(
	p^{\prime }\right) \right)   \\ 
	& =  \left( p\cdot \alpha \left( p^{\prime }\right) ,s\cdot \beta \left(
	s^{\prime }\right) \right) \quad (\because \alpha (p') = \beta (s')) \\ 
	& =  \left( p\vartriangleleft p^{\prime },s\vartriangleleft s^{\prime
	}\right) \quad (\because X2 \text{ condition of } \alpha , \beta) \\ 
	& = \left( p,s\right) \vartriangleleft \left( p^{\prime },s^{\prime
	}\right) 
	\end{align*}
\end{itemize}
for all $\left( p,s\right) ,\left( p^{\prime },s^{\prime }\right) \in
P\times _{R}S$ and $r\in R$.
\end{proof}

\section{Pullback Crossed Modules in the Category of Racks}

%The original idea comes from the case of groups \cite{zbMATH00861611}.

\subsection{Idea}

\begin{definition}
Suppose that we have a crossed module of racks $\partial \colon P \to R$ and a rack homomorphism $\phi :S\rightarrow R$. The pullback crossed module of racks $\phi ^{\ast }(P,R,\partial
)=\left( \phi ^{\ast }(P),S,\partial ^{\ast }\right)$ is a crossed module of racks, such that satisfying the following universal property:
\begin{itemize}
\item For a given crossed module morphism of racks:
\begin{equation*}
\left( f,\phi \right) :\left( X,S,\mu \right) \rightarrow (P,R,\partial )
\end{equation*}
 there exists a unique crossed module morphism: 
 $$\left( f^{\ast
},id_{S}\right) :\left( X,S,\mu \right) \rightarrow \left( \phi ^{\ast
}(P),S,\partial ^{\ast }\right) $$ 
which makes the following diagram commutative:
$$ \xymatrix@R=40pt@C=40pt{
& & & (X,S,\mu ) \ar[d]^{(f,\phi)} \ar@{-->}[dlll]_{(f^{\ast },id_{S})} & \\
\ \left( \phi ^{\ast }(P),S,\partial^{\ast }\right) \ar[rrr]_{(\phi',\phi)} &  & & (P,R,\partial) & }
$$
\end{itemize}
On other words, the previous definition can be seen as a pullback diagram of rack homomorphisms:
\begin{align}\label{dia1}
\xymatrix@R=20pt@C=20pt{
  X \ar[dd]_{\mu} \ar[rr]^{f} \ar@{.>}[dr]|-{f^{\ast}}
              &  & P \ar[dd]^{\partial}  \\
&\phi^{\ast}(P)\ar[ur]_{\phi ^{\prime }}\ar[dl]^{\partial^{\ast}}&
 \\ S  \ar[rr]_{\phi}
               & & R             }
\end{align}
with extra crossed module data.
\end{definition}

\subsection{Construction}

Let $\partial \colon P \to R$ be a crossed module and let $\phi
:S\rightarrow R$ be a rack homomorphism. Define $\phi ^{\ast }(P) = P\times _{R}S$ and $\partial ^{\ast
} \colon \phi ^{\ast }(P)\rightarrow S$ by $\partial ^{\ast }\left( p,s\right) =s$.

\medskip

\textbf{Claim:} $\partial ^{\ast }$ is a crossed module where the action of $S$ on $\phi ^{\ast }(P)$ is defined by:
\begin{equation*}
\begin{array}{ccl}
\phi ^{\ast }(P)\times S & \rightarrow  & \phi ^{\ast }(P) \\
\left( \left( p,s\right) ,s^{\prime }\right)  & \mapsto  & \left( p,s\right)
\cdot s^{\prime }=\left( p\cdot \phi \left( s^{\prime }\right)
,s\vartriangleleft s^{\prime }\right)
\end{array}%
\end{equation*}%
First of all $\partial ^{\ast }$ is a rack homomorphism since:
\begin{align*}
\partial ^{\ast }\left( \left( p,s\right) \vartriangleleft \left( p^{\prime
},s^{\prime }\right) \right)  & =  \partial ^{\ast }\left(
p\vartriangleleft p^{\prime },s\vartriangleleft s^{\prime }\right)  \\
& =  s\vartriangleleft s^{\prime } \\
& =  \partial ^{\ast }\left( p,s\right) \vartriangleleft \partial ^{\ast
}\left( p^{\prime },s^{\prime }\right) 
\end{align*}
for all $\left( p,s \right) ,\left( p^{\prime },s' \right) \in \phi ^{\ast
}(P)$ and $s'' \in S$. Furthermore the action conditions are satisfied:
\begin{itemize}
	\item
	\begin{align*}
	\left( \left( p,s\right) \cdot s^{\prime }\right) \cdot s^{\prime \prime } & 
	= \left( p\cdot \phi \left( s^{\prime }\right) ,s\vartriangleleft
	s^{\prime }\right) \cdot s^{\prime \prime }   \\ 
	& =  \left( \left( p\cdot \phi \left( s^{\prime }\right) \right) \cdot \phi
	\left( s^{\prime \prime }\right) ,\left( s\vartriangleleft s^{\prime
	}\right) \vartriangleleft s^{\prime \prime }\right)    \\ 
	& =  \left( \left( p\cdot \phi \left( s^{\prime \prime }\right) \right)
	\cdot \left( \phi \left( s^{\prime }\right) \vartriangleleft \phi \left(
	s^{\prime \prime }\right) \right) ,\left( s\vartriangleleft s^{\prime \prime
	}\right) \vartriangleleft \left( s^{\prime }\vartriangleleft s^{\prime
	\prime }\right) \right)    \\ 
& =  \left( \left( p\cdot \phi \left( s^{\prime \prime }\right) \right)
\cdot \phi \left( s^{\prime }\vartriangleleft s^{\prime \prime }\right)
,\left( s\vartriangleleft s^{\prime \prime }\right) \vartriangleleft \left(
s^{\prime }\vartriangleleft s^{\prime \prime }\right) \right)    \\ 
& = \left( p\cdot \phi \left( s^{\prime \prime }\right) ,s\vartriangleleft
s^{\prime \prime }\right) \cdot \left( s^{\prime }\vartriangleleft s^{\prime
	\prime }\right)  \\ 
& =  \left( \left( p,s\right) \cdot s^{\prime \prime }\right) \cdot \left(
s^{\prime }\vartriangleleft s^{\prime \prime }\right)  
\end{align*}
	
	\item
\begin{align*}
\left( \left( p,s\right) \vartriangleleft \left( p^{\prime },s^{\prime
}\right) \right) \cdot s^{\prime \prime } & = \left( p\vartriangleleft
p^{\prime },s\vartriangleleft s^{\prime }\right) \cdot s^{\prime \prime } \\ 
& =  \left( \left( p\vartriangleleft p^{\prime }\right) \cdot \phi \left(
s^{\prime \prime }\right) ,\left( s\vartriangleleft s^{\prime }\right)
\vartriangleleft s^{\prime \prime }\right)  \\ 
& =  \left( \left( p\cdot \phi \left( s^{\prime \prime }\right)
\vartriangleleft p^{\prime }\cdot \phi \left( s^{\prime \prime }\right)
\right) ,\left( s\vartriangleleft s^{\prime \prime }\right) \vartriangleleft
\left( s^{\prime }\vartriangleleft s^{\prime \prime }\right) \right)  \\ 
& = \left( p\cdot \phi \left( s^{\prime \prime }\right) ,\left(
s\vartriangleleft s^{\prime \prime }\right) \right) \vartriangleleft \left(
p^{\prime }\cdot \phi \left( s^{\prime \prime }\right) ,\left( s^{\prime
}\vartriangleleft s^{\prime \prime }\right) \right)  \\ 
& = \left( \left( p,s\right) \cdot s^{\prime \prime }\right)
\vartriangleleft \left( \left( p^{\prime },s^{\prime }\right) \cdot
s^{\prime \prime }\right) 
\end{align*}
\end{itemize}
Finally, $\partial ^{\ast }$ is a crossed module:
\begin{itemize}
	\item[X1)]
\begin{align*}
\partial ^{\ast }\left( \left( p,s\right) \cdot s^{\prime }\right) & = 
\partial ^{\ast }\left( p\cdot \phi \left( s^{\prime }\right)
,s\vartriangleleft s^{\prime }\right) \\ 
& =  s\vartriangleleft s^{\prime } \\ 
& =  \partial ^{\ast }\left( p,s\right) \vartriangleleft s^{\prime }%
\end{align*}

\item[X2)]

\begin{align*}
\left( p,s\right) \cdot \partial ^{\ast }\left( p^{\prime },s^{\prime
}\right)  & = \left( p,s\right) \cdot s^{\prime }   \\ 
& = \left( p\cdot \phi \left( s^{\prime }\right) ,s\vartriangleleft
s^{\prime }\right)    \\ 
& = \left( p\cdot \partial \left( p^{\prime }\right) ,s\vartriangleleft
s^{\prime }\right)  \quad ( \because \partial \left( p^{\prime }\right) =\phi
\left( s^{\prime }\right) ) \\ 
& = \left( p\vartriangleleft p^{\prime },s\vartriangleleft s^{\prime
}\right)  \quad (\because X2 \text{ condition of } \partial) \\ 
& =  \left( p,s\right) \vartriangleleft \left( p^{\prime },s^{\prime
}\right)  & 
\end{align*}

\end{itemize}
for all $\left( p,s\right) ,\left( p^{\prime },s^{\prime }\right) \in \phi
^{\ast }(P)$.

\medskip

\textbf{Claim:} This construction satisfies the universal property. 

\medskip

To state it, we need the crossed module morphism:
\begin{equation*}
\left( \phi ^{\prime },\phi \right) :\left( \phi ^{\ast }(P),S,\partial
^{\ast }\right) \rightarrow (P,R,\partial )
\end{equation*}%
where $\phi ^{\prime }:\phi ^{\ast }(P)\rightarrow P$ is given by $\phi
^{\prime }\left( p,s\right) =p$. 

\medskip

Suppose that $\left( X,S,\mu \right) $ is an arbitrary crossed module with a crossed module morphism:
\begin{equation*}
\left( f,\phi \right) :\left( X,S,\mu \right) \rightarrow (P,R,\partial )
\end{equation*}

We need to prove that: there exists a unique crossed module morphism:
\begin{equation*}
\left( f^{\ast },id_{S}\right) :\left( X,S,\mu \right) \rightarrow \left(
\phi ^{\ast }(P),S,\partial ^{\ast }\right)
\end{equation*}%
such that:
\begin{equation*}
\left( \phi ^{\prime },\phi \right) \, \left( f^{\ast },id_{S}\right)  =\left(
f,\phi \right).
\end{equation*}

\medskip

Define $f^{\ast } \colon X\rightarrow \phi ^{\ast }(P)$ by $f^{\ast }(x)=\left(
f\left( x\right) ,\mu \left( x\right) \right) $, for all $x\in X$. Then the tuple $\left( f^{\ast },id_{S}\right) $ becomes a crossed module morphism, 
%with the diagram:
%$$ \xymatrix @R=40pt@C=40pt{
%X\ar[r]^{\mu} \ar[d]_{f^{\ast }} &  S  \ar[d]^{id_{S}}  \\
%\ \phi ^{\ast }(P) \ar[r]_-{\partial^{\ast }} & S   } $$
since (for all $s\in S$ and $x\in X$):
\begin{itemize}
\item
\begin{align*}
f^{\ast }\left( x\cdot s\right) & = \left( f\left( x\cdot s\right) ,\mu
\left( x\cdot s\right) \right) \\
& = \left( f\left( x\right) \cdot \phi \left( s\right) ,\mu
\left( x\cdot s\right) \right) \quad (\because (f,\phi) \text{ crossed module morphism})  \\
& = \left( f\left( x\right) \cdot \phi \left( s\right) ,\mu \left(
x\right) \vartriangleleft s\right) \quad (\because X1 \text{ condition of } \mu)  \\
& = \left( f\left( x\right) ,\mu \left( x\right) \right) \cdot s   \\
& = f^{\ast }(x)\cdot id_{S}\left( s\right) 
\end{align*}
\item
\begin{align*}
\partial ^{\ast }f^{\ast }(x) & =  \partial ^{\ast }\left( f\left( x\right)
,\mu \left( x\right) \right) \\
& = \mu \left( x\right)
\\
& =  id_{S} \, \mu \left( x\right) .
\end{align*}
\end{itemize}
%Remark that, we explicitly used the action conditions (Definition \ref{act}) in the above calculations. 

Finally the diagram \eqref{dia1} 
%$$\xymatrix@R=20pt@C=20pt{
%	X \ar[dd]_{\mu} \ar[rr]^{f} \ar@{.>}[dr]|-{f^{\ast}}
%	&  & P \ar[dd]^{\partial}  \\
%	&\phi^{\ast}(P)\ar[ur]_{\phi ^{\prime }}\ar[dl]^{\partial^{\ast}}&
%	\\ S  \ar[rr]_{\phi}
%	& & R             }$$
commutes, since (for all $x \in X$):
\begin{align*}
\partial ^{\ast }f^{\ast }(x) & = \partial ^{\ast }\left( f\left( x\right)
,\mu \left( x\right) \right)  \\
& = \mu \left( x\right) \\
\phi ^{\prime }f^{\ast }(x) & = \phi ^{\prime }\left( f\left( x\right)
,\mu \left( x\right) \right)  \\
& = f\left( x\right) 
\end{align*}
and also $\phi \, \partial^{\ast} = \partial \, \phi'$ by the definition of $\phi^{\ast}(P)$.

\bigskip

Let $(f^{\prime }, id_S) \colon X\rightarrow \phi ^{\ast }(P)$ be a  crossed module morphism of racks with the same properties of $(f^{\ast } , id_S)$. Define  $f^{\prime }$ by $f^{\prime }\left(
x\right) =\left( p,s\right)$. Then we get:
\begin{align*}
\phi ^{\prime }f^{\prime }\left( x\right) & = f\left( x\right) \Leftrightarrow
\phi ^{\prime }\left( p,s\right) =f\left( x\right) \Leftrightarrow p=f\left(
x\right) \\
\partial ^{\ast }f^{\prime }\left( x\right) & =\mu \left( x\right)
\Leftrightarrow \partial ^{\ast }\left( p,s\right) =\mu \left( x\right)
\Leftrightarrow s=\mu \left( x\right) 
\end{align*}
lead to:
\[
f^{\prime }\left( x\right) =\left( p,s\right) =\left( f\left( x\right) ,\mu
\left( x\right) \right) =f^{\ast }(x)
\]
which implies that $(f^{\ast } , id_S)$ is unique, and completes the construction.

\begin{definition}
	Let us fix a rack $X$ as a codomain for all crossed modules and construct the related category which is the full subcategory of crossed modules of racks. These kinds of crossed modules will be called as crossed$_X$ modules and denote the corresponding category by $\mathbf{XRack/X}$. 
\end{definition}

\begin{corollary}
	As a consequence of the pullback crossed module structure in the category of racks, we have the functor: $$i^{\ast} \colon \mathbf{XRack/R \to XRack/S}.$$
\end{corollary}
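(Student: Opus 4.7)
The plan is to define $i^{\ast}$ on objects and on morphisms separately, and then derive functoriality essentially for free from the universal property established in the pullback construction. Writing $\phi = i$ to match the previous notation, on objects I would set $i^{\ast}(P, R, \partial) = (\phi^{\ast}(P), S, \partial^{\ast})$; the preceding subsection already shows this is a crossed$_S$ module, so no new verification is needed at this stage.

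For a morphism $(g, id_R) \colon (P, R, \partial) \to (P', R, \partial')$ of $\mathbf{XRack/R}$, I would form the composite
\[
(g \circ \phi', \phi) \colon (\phi^{\ast}(P), S, \partial^{\ast}) \to (P', R, \partial')
\]
and check it is a crossed module morphism. Boundary compatibility $\partial' g \phi' = \partial \phi' = \phi \partial^{\ast}$ uses that $(g, id_R)$ fixes the base $R$ together with $(\phi', \phi)$ already being a crossed module morphism; action-equivariance $(g\phi')((p,s) \cdot s') = (g\phi')(p,s) \cdot \phi(s')$ reduces, via the definition of the $S$-action on $\phi^{\ast}(P)$, to the equivariance of $g$ over $R$. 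Applying the universal property to $(P', R, \partial')$ along $\phi$ then yields a unique crossed module morphism $i^{\ast}(g) \colon (\phi^{\ast}(P), S, \partial^{\ast}) \to (\phi^{\ast}(P'), S, \partial'^{\ast})$ with second coordinate $id_S$, which by the explicit formula from the construction is $(p, s) \mapsto (g(p), s)$.

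Functoriality is then immediate from uniqueness. For identities, both $i^{\ast}(id_P)$ and the identity on $\phi^{\ast}(P)$ satisfy the universal property for the morphism $(\phi', \phi)$, so they must agree. For composition, given $(g, id_R)$ and $(g', id_R)$, both $i^{\ast}(g' \circ g)$ and $i^{\ast}(g') \circ i^{\ast}(g)$ are crossed$_S$ module morphisms from $(\phi^{\ast}(P), S, \partial^{\ast})$ to $(\phi^{\ast}(P''), S, \partial''^{\ast})$ whose post-composition with the canonical morphism of the target pullback coincides with $(g' \circ g \circ \phi', \phi)$, so the uniqueness clause forces equality.

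The main obstacle, mild as it is, is the verification that $(g \circ \phi', \phi)$ really is a crossed module morphism so that the universal property can be invoked; this is a short routine check of the two crossed module morphism conditions. Once it is in hand, both the definition of $i^{\ast}$ on morphisms and its functoriality emerge directly from the uniqueness of the lift produced by the universal property.
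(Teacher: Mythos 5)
Your proposal is correct and is exactly the argument the paper intends but leaves implicit: the corollary is stated without proof as a consequence of the construction, and your use of the universal property (applying it to the composite $(g\circ\phi',\phi)$, reading off the explicit formula $(p,s)\mapsto(g(p),s)$, and getting functoriality from uniqueness) is the standard way to fill in those details. The only simplification available is to note that $(g\circ\phi',\phi)$ is automatically a crossed module morphism, being a composite of the morphisms $(\phi',\phi)$ and $(g,id_R)$ in $\mathbf{XRack}$, so no separate verification is needed there.
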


%\begin{corollary}
%Let $(P,R,\partial )$ be a crossed modules of racks and $\phi :S\rightarrow R
%$ be a morphism of racks. Then there is a pullback diagram
%
%$$ \xymatrix @R=40pt@C=40pt{
%\phi ^{\ast }(P) \ar[r]^{\phi ^{\prime }} \ar[d]_{\partial^{\ast }} &  P  \ar[d]^{\partial} & \\
%\ S \ar[r]_\phi & R &  } $$
%
%\end{corollary}

%\begin{proof}
%It is straight forward from a direct calculation.
%\end{proof}

\begin{example}
Let $\partial \colon N\rightarrow R$ be an inclusion crossed module and $\phi :S\rightarrow R$ be a rack homomorphism. Then the pullback crossed module is defined by:
\begin{align*}
\phi ^{\ast }\left( N\right) & =  \left\{ \left( n,s\right) \mid \partial
\left( n\right) =\phi \left( s\right) \text{, }n\in N\text{, }s\in S\right\}
\\
& \cong  \left\{ s\in S\mid \phi \left( s\right) =n,\text{ }n\in N\right\}
\\
& =  \phi ^{-1}\left( N\right)%
\end{align*}
with the diagram:
$$ \xymatrix @R=40pt@C=40pt{
\phi ^{-1}\left( N\right) \ar[r]^-{\phi ^{\prime }} \ar[d]_{\partial^{\ast }} &  N  \ar[d]^{\partial} \\
\ S \ar[r]_\phi & R  } $$
where the preimage $\phi ^{-1}\left( N\right) $ is a normal subrack of $S$.
\end{example}

It follows that:
\begin{example}
If $N=\left\{ 1\right\} $ and $R$ is a rack, then:
\begin{equation*}
\phi ^{\ast }\left( \left\{ 1\right\} \right) \cong \left\{ s\in S\mid \phi
\left( s\right) =1\right\} =\ker \phi .
\end{equation*}%
Thus $\left( \ker \phi ,S,\partial ^{\ast }\right) $ is a pullback crossed
module which implies $\ker \phi$ is a normal subrack. 
\end{example}

\begin{corollary}
	Kernel of a rack homomorphism is the particular case of pullback crossed module.
\end{corollary}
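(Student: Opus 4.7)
The plan is to specialize the preceding Example to the case $N = \{1\}$. Since every pointed rack $R$ contains $\{1\}$ as a normal subrack (because $1 \r r = 1$ for all $r \in R$), the inclusion $\partial \colon \{1\} \hookrightarrow R$ is an inclusion crossed module. Given any rack homomorphism $\phi \colon S \to R$, I then apply the general pullback crossed module construction from the previous subsection to this inclusion.

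The first step is to identify the underlying set. By the fiber product definition,
$$\phi^{\ast}(\{1\}) = \{(1,s) \in \{1\} \times S \mid \phi(s) = 1\},$$
and projection onto the second coordinate yields the rack isomorphism $\phi^{\ast}(\{1\}) \cong \ker \phi$. This is exactly what the preceding Example already records, so no new work is required here.

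The second step is to transport the crossed module data through this isomorphism. The differential $\partial^{\ast}(1,s) = s$ becomes the canonical inclusion $\ker \phi \hookrightarrow S$, and the pullback action
$$(1,s) \cdot s' \;=\; \bigl(1 \cdot \phi(s'),\, s \r s'\bigr) \;=\; \bigl(1,\, s \r s'\bigr)$$
becomes the ambient rack operation of $S$ restricted to $\ker \phi$. Hence $(\ker \phi, S, \partial^{\ast})$ coincides with the inclusion crossed module associated to the normal subrack $\ker \phi \trianglelefteq S$, so the kernel arises literally as a pullback crossed module along $\phi$ of the trivial inclusion crossed module.

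There is essentially no obstacle here, since all of the verification work has already been carried out in the construction of $\phi^{\ast}(P)$ above. The only point worth flagging is that the bijection $\phi^{\ast}(\{1\}) \cong \ker \phi$ is an isomorphism of crossed modules, not merely of racks; this is precisely the structure transport described in the second step, after which the corollary follows at once.
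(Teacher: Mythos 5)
Your proposal is correct and follows essentially the same route as the paper, which obtains this corollary directly from the preceding example by setting $N=\{1\}$ and identifying $\phi^{\ast}(\{1\})\cong\ker\phi$. Your additional verification that the transported differential and action recover the inclusion crossed module structure on the normal subrack $\ker\phi$ is a sound (and slightly more explicit) elaboration of what the paper leaves implicit.
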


\begin{example}
If $N=R$ and $\phi $ is surjective, then:
\begin{equation*}
\phi ^{\ast }\left( R\right) =R\times S.
\end{equation*}
\end{example}

\section{Functorial Approach}

Let $R$ be a rack. The associated group $As(R)$ is the quotient of the free group $F(R)$ by the normal subgroup generated by the elements $y^{-1}x^{-1}y(x\r y)$ \cite{RC1}. This property leads to a functor:
\begin{align*}
\mathbf{As \colon Rack \to Grp}
\end{align*} 
which is right adjoint to the functor $\mathbf{Conj}$.

\medskip

The following lemma is due to \cite{RC1}:
\begin{lemma}
	Let $X$ be a rack and $G$ be a group. Given any rack homomorphism $f \colon X \to \Conj(G)$ there exists a unique group homomorphism $f_{\sharp} \colon \As(X) \to G$ such that the following diagram commutes:
	$$\xymatrix@R=40pt@C=40pt{
		X \ar[d]^{f} \ar[r]^{\mu}  
		&  \As(X) \ar[d]^{f_{\sharp}}    \\
		\Conj(G)  \ar[r]^{id} 
		& G     
	}$$
	where $\mu$ is the natural map. This leads to the adjunction:
	\begin{align*}
	Hom\big(\As(X),G \big) \cong Hom\big(X , \Conj(G) \big)
	\end{align*}
\end{lemma}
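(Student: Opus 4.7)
The plan is to exploit the explicit construction of $\As(X)$ as a quotient of the free group $F(X)$, and then use the universal property of free groups to build $f_{\sharp}$ in two stages.

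First, forget the rack structure and view $f \colon X \to \Conj(G)$ as a set-theoretic map from $X$ to the underlying set of $G$. By the universal property of the free group, this extends uniquely to a group homomorphism $\tilde{f} \colon F(X) \to G$ with $\tilde{f}(x) = f(x)$ on generators.

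Second, I would verify that $\tilde{f}$ kills every defining relator of $\As(X)$, i.e.\ every element of the form $y^{-1}x^{-1}y(x \r y)$ with $x,y \in X$. Using that $f$ is a rack homomorphism into the conjugation rack, $f(x \r y) = f(y)^{-1} f(x) f(y)$, so
\begin{align*}
\tilde{f}\bigl(y^{-1}x^{-1}y(x \r y)\bigr) &= f(y)^{-1} f(x)^{-1} f(y) \, f(y)^{-1} f(x) f(y) \\
&= 1.
\end{align*}
Hence $\tilde{f}$ vanishes on the normal subgroup generated by all such relators, and therefore descends to a well-defined group homomorphism $f_{\sharp} \colon \As(X) \to G$ satisfying $f_{\sharp} \circ \mu = f$ (viewed inside $\Conj(G) = G$ as sets), which is exactly the commutativity of the displayed square.

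For uniqueness, any group homomorphism $g \colon \As(X) \to G$ making the square commute must satisfy $g(\mu(x)) = f(x)$ for every $x \in X$; since $\mu(X)$ generates $\As(X)$ as a group, $g$ is forced to equal $f_{\sharp}$. Finally, to obtain the adjunction, I would define the bijection $\mathrm{Hom}(\As(X),G) \to \mathrm{Hom}(X,\Conj(G))$ by precomposition with $\mu$, observe that the construction $f \mapsto f_{\sharp}$ supplies the inverse, and note naturality in both $X$ (via $\As$ functoriality) and $G$ (by post-composition), which is immediate from the universal description. The only nontrivial step is the relator computation above; everything else is formal from the freeness of $F(X)$.
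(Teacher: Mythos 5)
Your proof is correct. Note that the paper itself offers no proof of this lemma --- it is quoted from the cited reference on the adjunction between $\mathbf{As}$ and $\mathbf{Conj}$ --- and your argument (extend $f$ freely to $F(X)$, check the relators $y^{-1}x^{-1}y(x\vartriangleleft y)$ die because $f(x\vartriangleleft y)=f(y)^{-1}f(x)f(y)$, descend to the quotient, and get uniqueness from the fact that $\mu(X)$ generates $\As(X)$) is exactly the standard one. The only step worth making explicit for the adjunction is the inverse direction of your bijection: precomposing a group homomorphism $\As(X)\to G$ with $\mu$ lands in rack homomorphisms $X\to\Conj(G)$ precisely because the imposed relations force $\mu(x\vartriangleleft y)=\mu(y)^{-1}\mu(x)\mu(y)$ in $\As(X)$, i.e.\ $\mu$ is itself a rack homomorphism into $\Conj(\As(X))$; this is the same relator computation read in the other direction, so nothing is missing in substance.
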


A great property of these $\As$ and $\Conj$ functors is: ``they both preserve the crossed module structure", proven in  \cite{Crans}. 
\begin{corollary}
 We have induced functors between the categories of such crossed modules:
\begin{align*}
	\As_X \colon \mathbf{XRack} \to \mathbf{XGrp} \quad \quad \mathbf{Conj}_X \colon \mathbf{XGrp} \to \mathbf{XRack}
\end{align*}
\end{corollary}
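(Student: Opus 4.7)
The plan is to invoke the cited result of Crans that $\As$ and $\Conj$ send crossed-module objects to crossed-module objects, and then show the assignments are functorial at the level of morphisms. Both induced functors act componentwise: for a crossed module morphism $(f_1, f_0) \colon (R, S, \partial) \to (R', S', \partial')$, set $\As_X(f_1, f_0) := (\As(f_1), \As(f_0))$, and analogously define $\Conj_X$ on morphisms. The two axioms for a crossed module morphism --- commutativity with the boundary and equivariance under the action --- must be verified for the image pair, and preservation of identities and composition must be checked.

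The boundary-compatibility $\As(\partial') \As(f_1) = \As(f_0) \As(\partial)$ follows by applying the functor $\As$ to $\partial' f_1 = f_0 \partial$; the analogous argument works for $\Conj$. Equivariance in the $\Conj_X$ direction is essentially tautological, since the rack action on $\Conj(M)$ is literally the group action on $M$, so the identity $f_1(m \cdot n) = f_1(m) \cdot f_0(n)$ is the same relation read in two categories. In the $\As_X$ direction, the group action of $\As(S)$ on $\As(R)$ furnished by Crans' construction is characterized by the universal property of $\As$: it is the unique extension of the underlying rack action of $S$ on $R$. Consequently, the equality $\As(f_1)(r \cdot s) = \As(f_1)(r) \cdot \As(f_0)(s)$ holds on the generators of $\As(R)$ by equivariance of $f_1$ at the rack level, and therefore on all of $\As(R)$ by uniqueness.

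Functoriality in the categorical sense (preservation of identities and composition) is inherited directly from the functors $\As$ and $\Conj$ acting on each component of the pair. The main potential obstacle is making the action-extension argument rigorous in the $\As_X$ case, but this is exactly the content of the crossed-module-preservation theorem of Crans that is being invoked, so no additional calculation is required beyond pointing out how the universal property of $\As$ propagates equivariance from generators to the whole associated group.
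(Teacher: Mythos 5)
Your proposal is correct and follows essentially the same route as the paper, which offers no argument beyond citing Crans' theorem that $\As$ and $\Conj$ preserve crossed module structure; you simply make explicit the routine morphism-level checks (componentwise definition, boundary compatibility by functoriality, equivariance via the generators of $\As(R)$) that the paper leaves implicit. Nothing in your verification goes beyond what the cited preservation theorem already guarantees, so the two proofs agree in substance.
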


 Therefore we can give the following theorem which is the generalization of the above lemma:

\begin{theorem}
	Let $\X$ be a crossed module of racks and $\G$ be a crossed module of groups. Given a crossed module morphism of racks $(f,g) \colon \X \to \Conj_X(\G)$, there exists a unique crossed module morphism of groups $(f_{\sharp} , g_{\sharp} ) \colon \As_X(\X) \to \G$ such that the following diagram commutes:
		$$\xymatrix@R=40pt@C=40pt{
			\mathbf{\X} \ar[d]^{(f,g)} \ar[r]^{(\mu , \mu)}  
			&  \As_X(\X) \ar[d]^{(f_{\sharp} , g_{\sharp})}    \\
			\mathbf{Conj}_X(\G)  \ar[r]^{(id,id)} 
			& \mathbf{\G}        
		}$$
\end{theorem}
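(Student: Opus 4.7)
The plan is to apply the preceding lemma separately at the top and bottom levels of the crossed modules in question, and then to upgrade the resulting pair of group homomorphisms into a morphism in $\XGrp$ by checking the two crossed-module-morphism conditions on generators.

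Writing $\X = (R,S,\d)$ and unfolding the datum $(f,g)$, we are given rack homomorphisms $f \colon R \to \Conj(M)$ and $g \colon S \to \Conj(N)$, where $\G$ has underlying groups $M$ and $N$. The first step is to invoke the preceding lemma once on $f$ and once on $g$, producing unique group homomorphisms $f_{\sharp} \colon \As(R) \to M$ and $g_{\sharp} \colon \As(S) \to N$ with $f_{\sharp}\, \mu_R = f$ and $g_{\sharp}\, \mu_S = g$, where $\mu_R$ and $\mu_S$ denote the universal maps into the associated groups. Commutativity of the diagram displayed in the theorem is then immediate from these two identities read at each of the two levels.

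The substantive step is to verify that $(f_{\sharp}, g_{\sharp})$ is a morphism in $\XGrp$ from $\As_X(\X)$ to $\G$. This amounts to two identities of group homomorphisms: commutativity with the boundary of $\G$, and equivariance for the action of $\As(S)$ on $\As(R)$ that turns $\As_X(\X)$ into a crossed module of groups (this action exists by the functoriality of $\As_X$ recalled above and proved in \cite{Crans}). Both identities have source groups generated by the images of $\mu_R$ and $\mu_S$, so by the universal property of $\As$ it suffices to check them on those generators. On generators, the boundary identity reduces to the commutativity of $(f,g)$ as a rack crossed module morphism, and the equivariance identity reduces to the action-preservation relation $f(r \cdot s) = f(r) \cdot g(s)$; both are part of the hypothesis that $(f,g)$ is a morphism in $\XRack$.

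Uniqueness is inherited from the uniqueness clause of the underlying lemma applied at each level: any competitor $(f', g')$ making the square commute must in particular extend $f$ and $g$ along $\mu_R$ and $\mu_S$, forcing $f' = f_{\sharp}$ and $g' = g_{\sharp}$. The only real obstacle I anticipate is bookkeeping with the $\As(S)$-action on $\As(R)$ carried by $\As_X(\X)$: one has to know that this action is precisely the group-theoretic transport of the rack action along the universal maps, so that the equivariance check on generators genuinely reduces to the rack-level hypothesis. Once that identification is taken from \cite{Crans}, the remaining verifications are routine diagram-chases on generators.
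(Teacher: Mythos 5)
Your argument is essentially correct, but there is nothing in the paper to compare it against: the theorem is stated with no proof at all (the authors evidently regard it as immediate from the lemma of \cite{RC1} together with the fact, taken from \cite{Crans}, that $\As$ and $\Conj$ preserve crossed module structures). Your write-up therefore supplies details the paper omits, and your decomposition --- apply the lemma once at each level to obtain $f_{\sharp}$ and $g_{\sharp}$, verify the two crossed-module-morphism identities on generators, and inherit uniqueness levelwise --- is surely the route the authors intend. One point to tighten: for the equivariance identity $f_{\sharp}(a\cdot b)=f_{\sharp}(a)\cdot g_{\sharp}(b)$ the reduction to generators is not a single appeal to the universal property, because $b\mapsto a\cdot b$ is not a group homomorphism. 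The clean version is the standard closure argument: for a fixed generator $b=\mu_S(s)$ both sides are homomorphisms in $a$ (the action is by automorphisms), so the identity holds for all $a\in\As(R)$ once it holds on $\mu_R(R)$, where it is exactly the rack-level hypothesis $f(r\cdot s)=f(r)\cdot g(s)$ via the identification $\mu_R(r)\cdot\mu_S(s)=\mu_R(r\cdot s)$; then the set of $b\in\As(S)$ for which the identity holds for all $a$ is checked to be a subgroup containing $\mu_S(S)$, hence all of $\As(S)$. You correctly flag the one genuine dependency, namely that the $\As(S)$-action on $\As(R)$ carried by $\As_X(\X)$ is the transport of the rack action along the universal maps, which is how \cite{Crans} constructs it. With that refinement spelled out, the proof is complete.
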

It follows that, we get a new extended adjunction between the categories of crossed modules of racks and category of crossed modules of groups:
\begin{align*}
Hom\big(\As_X (\mathcal{X}),\mathcal{G} \big) \cong Hom\big(\mathcal{X} , \Conj_X (\mathcal{G}) \big)
\end{align*}

As a main result of the paper, we have the following:

\begin{corollary}
The functor $\mathbf{Conj}_X$ preserves limits and $\mathbf{As}_X$ preserves colimits. Since the pullback crossed modules can be seen as a kind of pullback diagram which is also a certain case of categorical limits \cite{Adamek}, we have the following commutative diagram:
$$\xymatrix@R=40pt@C=40pt{
	\mathbf{XGrp/_{\square}} \ar[d]^{i^{\star}} \ar[r]^{\Conj_X}  
	&  \mathbf{XRack/_{\square}} \ar[d]^{i^{\star}}    \\
	\mathbf{XGrp/_{\square}}  \ar[r]^{\Conj_X} 
	& \mathbf{XRack/_{\square}}        
	}$$
\end{corollary}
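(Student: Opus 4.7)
The plan is to split the statement into its two assertions and handle them sequentially, using the adjunction just established as the main lever. First, the preservation claims are a direct consequence of the extended adjunction $\mathbf{As}_X \dashv \mathbf{Conj}_X$ derived from the preceding theorem: by the standard categorical fact (see \cite{Adamek}), any right adjoint preserves all limits that exist in the domain category, and any left adjoint preserves all colimits. Thus $\mathbf{Conj}_X$ preserves limits in $\mathbf{XGrp}$ (in particular, pullbacks viewed as limits of cospan diagrams), and $\mathbf{As}_X$ preserves colimits.

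For the commutativity of the square, I would spell out both composite functors on an object $\mathcal{G}=(P,R,\partial)$ of $\mathbf{XGrp/R}$ together with a group homomorphism $\phi\colon S\to R$. The upper-right path applies $\mathbf{Conj}_X$ first, yielding the crossed module of racks $(\mathbf{Conj}(P),\mathbf{Conj}(R),\partial)$ together with the rack morphism $\phi\colon \mathbf{Conj}(S)\to \mathbf{Conj}(R)$; then applying $i^{\star}$ produces the pullback crossed module with underlying rack $P\times_R S$, boundary $(p,s)\mapsto s$, and action $(p,s)\cdot s'=(p\cdot\phi(s'),\, s\vartriangleleft s')$, where $\vartriangleleft$ is the conjugation rack operation on $S$. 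The lower-left path applies $i^{\star}$ in the group setting first, giving the crossed$_S$ module of groups with the same underlying set $P\times_R S$, boundary $(p,s)\mapsto s$, and group action $(p,s)\cdot s'=(p\cdot\phi(s'),\, s'^{-1}s s')$; then applying $\mathbf{Conj}_X$ replaces each group by its conjugation rack, and in particular $s'^{-1}s s'=s\vartriangleleft s'$. The two constructions therefore agree on objects, and a parallel check on morphisms (both routes act by $(f_1,f_0)\mapsto(f_1\times f_0|_{P\times_R S},\,f_0)$ followed by the identity on underlying sets) shows they agree on morphisms as well, giving a strict equality of functors rather than just a natural isomorphism.

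I expect the main obstacle to be bookkeeping rather than conceptual: one must carefully distinguish the two versions of $i^{\star}$ (one in $\mathbf{XGrp}$, one in $\mathbf{XRack}$) and check that the rack morphism $\mathbf{Conj}(\phi)$ used downstairs on the right is precisely the one to which $i^{\star}$ in $\mathbf{XRack}$ is applied, so that the ``base'' $\square$ is compatible on both sides. Once this is unwound, the equality of the two composites reduces to the observation that the pullback of underlying sets is unchanged by $\mathbf{Conj}$ (the forgetful functors to $\mathbf{Set}$ commute) and that the defining formula $s'^{-1}ss'=s\vartriangleleft s'$ identifies the two actions term by term. Since the universal property of the pullback crossed module was established in the previous section in both categories in exactly the same form, the resulting crossed modules are canonically identified; alternatively, one may avoid the explicit check entirely by observing that $\mathbf{Conj}_X$ preserves the pullback limit used to define $i^{\star}$, and that $\mathbf{Conj}_X$ preserves the extra Peiffer data (X1) and (X2), which is exactly the content of the fact that it is a functor $\mathbf{XGrp}\to\mathbf{XRack}$.
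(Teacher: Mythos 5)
Your proposal is correct, and in fact it does more work than the paper itself, which offers no separate proof: the corollary's entire justification in the text is the abstract observation that $\mathbf{As}_X\dashv\mathbf{Conj}_X$ forces $\mathbf{Conj}_X$ to preserve limits, plus the remark that pullback crossed modules are ``a kind of'' pullback diagram. Your first paragraph reproduces exactly that argument. Where you genuinely diverge is in the second part: you verify the commutativity of the square by computing both composites explicitly on an object, checking that both routes land on the same underlying set $P\times_R S$, the same boundary $(p,s)\mapsto s$, and the same action via the identity $s'^{-1}ss'=s\vartriangleleft s'$. This element-wise check is more elementary but also more robust, because it sidesteps a subtlety the paper glosses over: as the paper's own introduction concedes, the pullback crossed module is \emph{not} the categorical pullback in $\mathbf{XGrp}$ or $\mathbf{XRack}$, so ``right adjoints preserve limits'' does not by itself deliver the commutativity of the square with $i^{\star}$ without some further argument identifying the construction with an actual limit that $\mathbf{Conj}_X$ is known to preserve. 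Your direct computation closes that gap; the paper's abstract route is shorter but leaves it implicit. The one point you should make fully precise if you write this up is the identification of the group-theoretic action $p^{\phi(s')}$ with the rack action $p\cdot\phi(s')$ under $\mathbf{Conj}$, which is immediate from the definition of $\mathbf{Conj}_X$ on crossed modules but is the hinge on which the term-by-term comparison turns.
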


\bibliographystyle{plain}
\bibliography{References}
%\nocite{*}

\end{document}